\documentclass[11pt]{amsart}
\usepackage{mathrsfs}
\usepackage{amsfonts}
\usepackage{latexsym,amsmath,amssymb}
 \textwidth 5.5 true in
\oddsidemargin 0.35 true in
\evensidemargin 0.35 true in
\setcounter{section}{0}
\pagestyle{plain}
\markboth{Lagrangian translating solitons}{ R. Huang \& R. Xu}
\footskip=50pt
\renewcommand{\epsilon}{\varepsilon}

\newtheorem{theorem}{Theorem}[section]
\newtheorem{lemma}{Lemma}[section]

\newtheorem{proposition}{Proposition}[section]
\newtheorem{deff}{Definition}[section]

\newcommand{\bth}{\begin{theorem}}
\newcommand{\ble}{\begin{lemma}}
\newcommand{\bcor}{\begin{corr}}
\newcommand{\bdeff}{\begin{deff}}
\newcommand{\bprop}{\begin{proposition}}
 \newcommand{\ele}{\end{lemma}}
 \newcommand{\ecor}{\end{corr}}
 \newcommand{\edeff}{\end{deff}}
 
 \newcommand{\eprop}{\end{proposition}}

 \renewcommand{\Pi}{\varPi}

 \renewcommand{\epsilon}{\varepsilon}

\numberwithin{equation}{section}

\newtheorem{lem}{Lemma}[section]
\newtheorem{cor}[lem]{Corollary}

\title
{On the rigidity theorems for
Lagrangian translating solitons in pseudo-Euclidean space II}
\author{Rongli Huang}
\author{Ruiwei Xu}

\address{School of Mathematics and Statistics, Guangxi Normal University,
Guilin, Guangxi 541004, People's Republic of China,
E-mail:ronglihuangmath@gxnu.edu.cn}
\address{College of Mathematics and Information Science,
Henan Normal University,
Xinxiang, Henan 453007, People's Republic of China,
E-mail:rwxu$@$henannu.edu.cn }
\date{}

\begin{document}
\maketitle

\begin{abstract}
Let $u$ be a smooth convex
function in  $\mathbb{R}^{n}$ and the graph $M_{\nabla u}$ of
$\nabla u$ be a space-like translating soliton in pseudo-Euclidean
space $\mathbb{R}^{2n}_{n}$ with a translating vector
$\frac{1}{n}(a_{1}, a_{2}, \cdots, a_{n}; b_{1}, b_{2}, \cdots,
b_{n})$, then the function $u$ satisfies
$$
\det D^{2}u=\exp \left\{ \sum_{i=1}^n- a_i\frac{\partial u}{\partial
x_{i}} +\sum_{i=1}^n b_ix_i+c\right\} \qquad \hbox{on}\qquad\mathbb
R^n$$ where $a_i$, $b_i$ and $c$ are constants. The Bernstein type results are obtained in the course of the arguments.
 \end{abstract}
\let\thefootnote\relax\footnote{
2010 \textit{Mathematics Subject Classification}. Primary 53A10; Secondary 53C44.

\textit{Keywords and phrases}. logarithmic Monge-Amp\`{e}re  flow, space-like translating soliton,
 Legendre transformation.}

\section{Introduction}
Consider the logarithmic Monge-Amp\`{e}re  flow, (cf.\cite{KT})
\begin{equation}\label{e1.6}
\left\{ \begin{aligned}\frac{\partial u}{\partial t}-\frac{1}{n}\ln
\det D^{2}u&=0,
& t>0,\quad x\in \mathbb{R}^{n}, \\
 u&=u_{0}(x), & t=0,\quad x\in \mathbb{R}^{n}.
\end{aligned} \right.
\end{equation}
By Proposition 2.1 in \cite{HB}, there exists a family of diffeomorphisms
$$ r_{t}: \mathbb{R}^{n}\rightarrow \mathbb{R}^{n},$$
such that the map
\begin{equation*}\begin{aligned}
F(x,t)&=(r_{t}(x),  Du(r_{t}(x),t)) \subset
\mathbb{R}^{2n}_{n},\\
F_{0}(x)&=(x, Du_{0}(x))
\end{aligned}
\end{equation*}
satisfies the mean curvature flow in pseudo-Euclidean
space:
\begin{equation*}\label{e1.7}
\left\{ \begin{aligned}\frac{dF}{dt}&=\overrightarrow{H}, \\
F(x,0)&=F_{0}(x),
\end{aligned} \right. \end{equation*}
where $\overrightarrow{H}$ is the mean curvature vector of the
sub-manifold defined by $F$.

\begin{deff}
Assume  that  $u_{0}(x)\in C^{2}(\mathbb{R}^{n})$. We call
$u_{0}(x)$ satisfying condition $\circledS$, if
\begin{equation*}
\Lambda I\geq D^{2}u_{0}(x)\geq \lambda I,\qquad x\in
\mathbb{R}^{n}.
\end{equation*}
Here  $ \Lambda, \lambda$ are two positive constants
 and $I$ is the identity matrix.
\end{deff}
The first author established the long time existence result of
the logarithmic Monge-Amp\`{e}re  flow \cite{HB}.

\begin{proposition}\label{p3.1}
Let $u_{0}:\mathbb{R}^{n}\rightarrow \mathbb{R}$ be a $C^{2}$
function which satisfies  condition $\circledS$.
Then  there exists a unique strictly
convex solution  of (\ref{e1.6}) such that
\begin{equation*}\label{e3.1}
u(x,t)\in C^{\infty}(\mathbb{R}^{n}\times (0,+\infty))\cap
C(\mathbb{R}^{n}\times [0,+\infty))
\end{equation*}
where $u(\cdot,t)$ satisfies condition $\circledS$.  More generally, for
 $l\in\{3,4,5\cdots\}$ and $\epsilon_{0}>0$,  there holds
\begin{equation*}\label{e3.3a}
\sup_{x\in\mathbb{R}^{n}}|D^{l}u(x,t)|^{2}\leq C, \qquad \forall
t\in (\epsilon_{0},+\infty),
\end{equation*}
where $C$  depends only on $n, \lambda, \Lambda, \dfrac{1}{\epsilon_{0}}.$
\end{proposition}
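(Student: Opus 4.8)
The plan is to treat \eqref{e1.6} as the fully nonlinear parabolic equation $u_t=F(D^2u)$ with $F(A)=\frac1n\log\det A$, which is concave and, on the set where $\lambda I\le D^2u\le\Lambda I$, uniformly parabolic with ellipticity constants depending only on $n,\lambda,\Lambda$; the proof then consists of the usual four ingredients --- barriers, a Hessian estimate, interior regularity, and an exhaustion argument --- carried out so that every constant has the advertised dependence.

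The first point is the $C^0$ estimate, uniqueness, and continuity at $t=0$. Because $\det D^2u_0\in[\lambda^n,\Lambda^n]$, the functions $u_0(x)+(\log\lambda)t$ and $u_0(x)+(\log\Lambda)t$ are, respectively, a sub- and a supersolution of \eqref{e1.6} with initial value $u_0$. The comparison principle for \eqref{e1.6} --- which holds on all of $\mathbb R^n$ since the difference of two solutions satisfies a uniformly parabolic linear equation and is, by this very sandwich, bounded on finite time intervals --- then gives
\[
u_0(x)+(\log\lambda)\,t\ \le\ u(x,t)\ \le\ u_0(x)+(\log\Lambda)\,t,
\]
hence the $C^0$ bound, uniqueness, and $u\in C(\mathbb R^n\times[0,\infty))$ with $u(\cdot,0)=u_0$; convexity upgrades this to interior $C^1$ bounds.

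The heart of the matter is the propagation of condition $\circledS$. Writing $(u^{ij})$ for the inverse of $D^2u$ and differentiating $u_t=F(D^2u)$ twice in a fixed unit direction $\xi$ yields
\[
\partial_t u_{\xi\xi}=\frac1n u^{ij}(u_{\xi\xi})_{ij}-\frac1n u^{ip}u^{qj}u_{\xi pq}u_{\xi ij}\ \le\ \frac1n u^{ij}(u_{\xi\xi})_{ij},
\]
so each $u_{\xi\xi}$ is a subsolution of the linearized operator $\partial_t-\frac1n u^{ij}\partial_i\partial_j$; a maximum-principle argument on $\mathbb R^n$ (again using the pinching to control behaviour at infinity), together with a standard open--closed continuity argument in $t$, then preserves $\sup_{x,\xi}u_{\xi\xi}\le\Lambda$, i.e.\ $D^2u(\cdot,t)\le\Lambda I$. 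For the lower bound I would use that \eqref{e1.6} is \emph{self-dual} under the Legendre transform: if $u^{*}(y,t)$ denotes the Legendre transform of $u(\cdot,t)$, then $D^2u^{*}=(D^2u)^{-1}$ and $u^{*}_t=-u_t=\frac1n\log\det D^2u^{*}$, while condition $\circledS$ for $u_0$ with constants $(\Lambda,\lambda)$ is exactly condition $\circledS$ for $u_0^{*}$ with constants $(\lambda^{-1},\Lambda^{-1})$; applying the upper bound already established to $u^{*}$ gives $D^2u^{*}(\cdot,t)\le\lambda^{-1}I$, that is $D^2u(\cdot,t)\ge\lambda I$ with the same constant. (Equivalently, one shows directly that the largest eigenvalue of $(D^2u)^{-1}$ is a subsolution.) Thus $u(\cdot,t)$ satisfies $\circledS$ for all $t$, and in particular $u_t=\frac1n\log\det D^2u\in[\log\lambda,\log\Lambda]$.

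Once $\circledS$ is known, \eqref{e1.6} is uniformly parabolic with fixed constants, and the remaining estimates are routine. The parabolic Evans--Krylov theorem (applicable since $F$ is concave) gives an interior $C^{2,\alpha}$ bound on $\mathbb R^n\times(\epsilon_0,\infty)$ with constant depending only on $n,\lambda,\Lambda,1/\epsilon_0$; differentiating the equation, each $u_k$ solves $\partial_t u_k=\frac1n u^{ij}(u_k)_{ij}$ with $C^{\alpha}$ coefficients, and parabolic Schauder theory bootstraps $u$ to $C^\infty(\mathbb R^n\times(0,\infty))$ with the stated bounds $\sup_x|D^{l}u|^2\le C(n,\lambda,\Lambda,1/\epsilon_0)$ for $l\ge3$. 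For existence, solve the initial--boundary value problem for \eqref{e1.6} on $B_R\times(0,\infty)$ with smooth data approximating $u_0$ and chosen within the pinch (so that the above a priori estimates hold uniformly in $R$), and let $R\to\infty$ along a subsequence converging locally uniformly in all derivatives; the limit is the desired solution. The main obstacle is the Hessian estimate --- securing the lower bound $D^2u\ge\lambda I$ with the \emph{same} $\lambda$ (done here via self-duality) and justifying each maximum-principle step on the noncompact domain $\mathbb R^n$, where the requisite growth control must be read off from condition $\circledS$ itself.
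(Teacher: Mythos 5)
This proposition is not proved in the paper at all: it is quoted verbatim from the first author's earlier work \cite{HB} (and the decay version from \cite{HW}), so there is no in-text argument to compare against. Judged on its own, your outline is the standard and essentially correct route to such a statement: (i) sub/supersolutions $u_0+(\log\lambda)t$, $u_0+(\log\Lambda)t$ for the $C^0$ bound and uniqueness; (ii) concavity of $A\mapsto\frac1n\log\det A$ making $u_{\xi\xi}$ a subsolution of the linearized operator, hence $D^2u\le\Lambda I$ is preserved; (iii) the Legendre self-duality $u^*_t=\frac1n\log\det D^2u^*$, $D^2u^*=(D^2u)^{-1}$, to recover $D^2u\ge\lambda I$ with the \emph{same} constant (a clean observation; the reference obtains the two-sided pinching by directly controlling both extreme eigenvalues, but the two are equivalent in content); (iv) Evans--Krylov plus Schauder bootstrapping, applied on time slabs of width $\epsilon_0/2$, giving the uniform-in-$t$ bounds on $D^l u$ with the advertised dependence; (v) exhaustion by balls for existence.

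Two places where your sketch is thinner than a complete proof and deserve explicit attention. First, every maximum-principle step is taken on the noncompact domain $\mathbb{R}^n$; you flag this, but the fix (e.g.\ perturbing the test quantity by $\epsilon(|x|^2+Kt)$, or the Omori--Yau / Ecker--Huisken style localization) should be stated, since $u_{\xi\xi}$ is only known to be bounded \emph{a priori} through the very pinching one is trying to preserve --- this is exactly where the open--closed argument in $t$ must be run carefully. Second, in the exhaustion step the approximating Dirichlet problems on $B_R$ require boundary second-order estimates (or a choice of boundary data, e.g.\ quadratic caps matching $u_0$ outside a large ball, for which such estimates are automatic) before the interior estimates can be applied uniformly in $R$; ``chosen within the pinch'' hides the only genuinely technical construction in the whole proof. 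Neither point is a conceptual gap, but both are where the details of \cite{HB} actually live.
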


More generally, the following decay estimates were derived
in \cite{HW}.
\begin{proposition}\label{t1.2a}
Assume that   $u(x,t)$ is a strictly convex  solution of
(\ref{e1.6}), and $u(\cdot,t)$ satisfies  condition $\circledS$.
Then there exists a positive constant $C$  depending only on $n,
\lambda, \Lambda, \dfrac{1}{\epsilon_{0}}$, such that
for all $l\in\{3,4,5\cdots\}$ there holds
\begin{equation}\label{e1.7a}
\sup_{x\in\mathbb{R}^{n}}|D^{l}u(x,t)|^{2}\leq
\frac{C}{t^{l-2}}, \qquad \forall t\geq\epsilon_{0}.
\end{equation}
\end{proposition}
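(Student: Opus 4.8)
The plan is to read off the decay rate from the parabolic scaling invariance of (\ref{e1.6}) together with the uniform higher-order bounds already recorded in Proposition~\ref{p3.1}. For $\mu>0$ set
$$u_{\mu}(x,t):=\frac{1}{\mu^{2}}\,u(\mu x,\mu^{2}t).$$
First I would verify that $u_{\mu}$ solves the same logarithmic Monge-Amp\`ere equation: one has $\partial_{t}u_{\mu}(x,t)=(\partial_{t}u)(\mu x,\mu^{2}t)$ and $D^{2}u_{\mu}(x,t)=(D^{2}u)(\mu x,\mu^{2}t)$, so $\partial_{t}u_{\mu}-\frac{1}{n}\ln\det D^{2}u_{\mu}$ is just the rescaling of $\partial_{t}u-\frac{1}{n}\ln\det D^{2}u\equiv 0$. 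Two features of this scaling drive the proof. First, $D^{2}u_{\mu}(x,0)=(D^{2}u_{0})(\mu x)$, so $u_{\mu}(\cdot,0)$ satisfies condition $\circledS$ with \emph{the same} constants $\lambda,\Lambda$; hence $u_{\mu}$ is a strictly convex solution to which Proposition~\ref{p3.1} applies with unchanged parameters — this is legitimate because the uniqueness clause there identifies $u_{\mu}$ with the solution it produces from the datum $u_{\mu}(\cdot,0)$. Second, each spatial derivative contributes one power of $\mu$, so for a multi-index with $|\alpha|=l$,
$$\partial^{\alpha}u_{\mu}(x,t)=\mu^{\,l-2}\,(\partial^{\alpha}u)(\mu x,\mu^{2}t),\qquad\text{hence}\qquad|D^{l}u_{\mu}(x,t)|=\mu^{\,l-2}\,|D^{l}u(\mu x,\mu^{2}t)|.$$

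Next I would fix $t\geq\epsilon_{0}$, specialize to $\mu:=\sqrt{t}$, and apply the higher-order estimate of Proposition~\ref{p3.1} to $u_{\mu}$ with the auxiliary parameter there taken to be, say, $\tfrac12$. This produces a constant $C=C(n,\lambda,\Lambda)$ — independent of $t$, because the rescaled datum always obeys $\circledS$ with the fixed $\lambda,\Lambda$ — such that $\sup_{x\in\mathbb{R}^{n}}|D^{l}u_{\mu}(x,s)|^{2}\leq C$ for every $s>\tfrac12$, in particular at $s=1$. Unwinding the scaling, $|D^{l}u_{\mu}(x,1)|=t^{(l-2)/2}\,|D^{l}u(\sqrt{t}\,x,t)|$, and letting $x$ (equivalently $y=\sqrt{t}\,x$) range over $\mathbb{R}^{n}$ gives
$$t^{\,l-2}\sup_{y\in\mathbb{R}^{n}}|D^{l}u(y,t)|^{2}=\sup_{x\in\mathbb{R}^{n}}|D^{l}u_{\mu}(x,1)|^{2}\leq C,$$
which is exactly (\ref{e1.7a}). (This even shows that $C$ may be taken independent of $\epsilon_{0}$; retaining the dependence on $1/\epsilon_{0}$ as in the statement is harmless.)

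I do not anticipate a real obstacle: once the scaling bookkeeping is arranged the argument is soft, the one delicate point being the appeal to uniqueness in Proposition~\ref{p3.1} so that its a priori estimates genuinely transfer to $u_{\mu}$. Should one prefer to avoid scaling, the estimate can instead be reached by a weighted Bernstein/maximum-principle argument: each $\partial_{k}u$ solves the uniformly parabolic linear equation $\partial_{t}w=\frac{1}{n}u^{ij}\partial_{ij}w$, with $u^{ij}$ the inverse Hessian (uniformly elliptic by $\circledS$); differentiating $l$ times yields $\partial_{t}(D^{l}u)=\frac{1}{n}u^{ij}\partial_{ij}(D^{l}u)+(\text{terms of order}<l)$, and computing the evolution of $\varphi_{l}:=t^{\,l-2}|D^{l}u|^{2}$, inducting on $l$, and absorbing the lower-order terms via the already-known bounds on $D^{2}u,\dots,D^{l-1}u$ closes the estimate, the weight $t^{\,l-2}$ being what forces the power in the denominator. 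That is the computational route; its main difficulty — selecting the auxiliary function and absorbing the inhomogeneous terms — is precisely what the scaling argument sidesteps.
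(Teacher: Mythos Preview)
Your scaling argument is correct: the logarithmic Monge--Amp\`ere flow is indeed invariant under $u\mapsto \mu^{-2}u(\mu x,\mu^{2}t)$, the Hessian bounds in condition $\circledS$ are preserved with the same $\lambda,\Lambda$, and reading off the $l$-th derivative at rescaled time $s=1$ with $\mu=\sqrt{t}$ yields exactly the $t^{-(l-2)}$ decay. The appeal to the uniqueness clause of Proposition~\ref{p3.1} is the right way to transfer its a~priori estimates to $u_{\mu}$, and your observation that the resulting constant is actually independent of $\epsilon_{0}$ is accurate.

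As for the comparison: the paper does not supply its own proof of Proposition~\ref{t1.2a}; the sentence preceding it attributes the decay estimates to \cite{HW} and simply quotes the result. In \cite{HW} the estimate is obtained by the direct Bernstein/maximum-principle computation you sketch as an alternative---one tracks the evolution of quantities built from $|D^{l}u|^{2}$ and absorbs lower-order terms using the inductively known bounds. Your scaling argument is a genuinely different, and cleaner, route: it replaces that induction and the bookkeeping of cross terms by a single invocation of the uniform bound in Proposition~\ref{p3.1} at a fixed time, exploiting that the equation and condition $\circledS$ are scale-invariant while $D^{l}u$ carries weight $\mu^{\,l-2}$. The trade-off is that your proof leans on the uniqueness statement (to identify $u_{\mu}$ with the solution produced by Proposition~\ref{p3.1}), whereas the computational approach works directly with any sufficiently smooth solution satisfying $\circledS$ and does not need uniqueness.
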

Self-shrinking solutions
of the mean curvature flow are determined by the following quasi-linear
elliptic systems
\begin{equation}\label{e1.11}
\vec{H}=-\frac{X^N}{2}.
\end{equation}
In the ambient Euclidean space, the self-shrinkers has been
considered in \cite{ACY},\cite{K}, \cite{QM1}, \cite{QM2}, \cite{IC},
\cite{LH}. In the ambient pseudo-Euclidean space, the self-shrinking
graphs with high codimensions  can be seen in  \cite{ACY}, \cite{HW}, \cite{DW},
\cite{DX}. The solutions are hoped to give a better understanding of
the flow at  type I singularities  by Huisken¡¯s monotonicity
formula. Let $M=\{(x,Du(x))|x\in \mathbb{R}^{n}\}$ be a space-like
submanifold satisfying (\ref{e1.11}) in $\mathbb{R}^{2n}_{n}$ with
the induced metric $ u_{ij} dx_idx_j$.
Then up to an additive constant the
function $u$ is a solution to the Monge-Amp\`{e}re  type
equation
\begin{equation}\label{e1.12}
\det D^{2}u=\exp\left\{n(-u+\frac{1}{2}\sum_{i=1}^{n}x_{i}\frac{\partial
u}{\partial x_{i}})\right\}.
\end{equation}
Q. Ding and Y.L. Xin \cite{DX}  proved that every classical strictly
convex entire solutions of the equation (\ref{e1.12}) must be a quadratic
polynomial.

Another important examples of Type II singularities is a class of
eternal solutions known as translating solitons. From \cite{AT}, we
see that A. Neves and G. Tian gave examples that exclude the
existence of nontrivial translating solutions to Lagrangian mean
curvature flow. Some interesting translating solitons were found by
D. Joyce, Y.I. Lee and M.P. Tsui \cite{DYM} with oscillation of the
Lagrangian angle arbitrarily small. Recently, Mart\'{i}n, Savas-Halilaj and Smoczyk \cite{MSS} obtained classification results and topological obstructions for the existence of translating solitons of the mean curvature flow in Euclidean space. In this paper we will classify the translating solutions of Lagrangian mean curvature flow under
certain convexity assumptions on the generating potential as flat
Lagrangian planes in  pseudo-Euclidean space $\mathbb{R}^{2n}_{n}$.

 A.M. Li and the second author \cite{LX} showed that every
smooth strictly convex solutions  of the
Monge-Amp\`{e}re  type equation
\begin{equation*}\label{e1.2}
\det D^{2}u=\exp\left\{-\sum_{i=1}^{n}d_{i}\frac{\partial u}{\partial
x_{i}}-d_{0}\right\},\,\,\, x\in \mathbb{R}^{n}
\end{equation*}
must be a quadratic polynomial where $d_{0}, d_{1},\cdots,d_{n}$ are
constants. 
Here we consider the following more general Monge-Amp\`{e}re type equation
\begin{equation}\label{e1.3} \det D^{2}u=\exp \left\{
\sum_{i=1}^n- a_i\frac{\partial
u}{\partial x_{i}} +\sum_{i=1}^n b_ix_i+c\right\} \qquad
\hbox{on}\qquad\mathbb R^n,
\end{equation}
where $a_i$, $b_i$ and $c$ are constants. According to the arguements in \cite{CM},  the entire solution to
(\ref{e1.3})  is  a  space-like translating soliton to Lagrangian
mean curvature flow in pseudo-Euclidean space. As for the authors,
it seems that the approach in \cite{LL} can't be applied to here for obtaining the rigidity result
 of the solutions. So we want to  search for new ideas
to prove the theorems similar to J\"{o}rgens \cite{J}, Calabi
\cite{C}, and Pogorelov \cite{P}.

Let $A$ be an $n\times n$ real matrix and define
$$\Lambda=\left\{l\;|\;A^l=I, A^{l-1}\neq I, l\in Z^{+}\right\}, \quad  l_A=\min_{l\in\Lambda} l.$$
Denote $a=(a_{1}, a_{2}, \cdots, a_{n})$, $b=(b_{1}, b_{2}, \cdots,
b_{n})$ and $\langle a,b\rangle$ is the inner product of two
vectors in $\mathbb{R}^n$. As an application of Proposition  \ref{t1.2a}, we can prove
that
\begin{theorem}\label{t1.1}
Let $u$ be a smooth strictly convex solution of (\ref{e1.3}) $(n\geq2)$
where $|a|\neq 0$, $|b|\neq 0$. Suppose that there exists an orthogonal
matrix $A$ such that $l_A\geq 3$ and $u(Ax)=u(x)$ for  each
$x\in\mathbb{R}^{n}$. If the  smallest eigenvalue $\mu(x)$ of
$D^{2}u$ satisfies
\begin{equation}\label{e1.511}
\liminf_{x\rightarrow\infty}|x|\mu(x)> \dfrac{n-1}{|a|\cos\dfrac{\pi}{l_A}},
\end{equation}
then  $u(x)$ must be  a quadratic polynomial.
\end{theorem}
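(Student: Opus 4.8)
The plan is to run the logarithmic Monge--Amp\`ere flow \eqref{e1.6} with initial datum $u_0:=u$, to observe that the flow issuing from a translating soliton is completely explicit, and then to feed this into the decay estimates of Proposition~\ref{t1.2a} to force $D^3u_0\equiv0$. The one genuinely non-formal ingredient is an \emph{a priori} two-sided Hessian bound (condition $\circledS$) for $u_0$; granting it, the rest is short.

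Assume for the moment that $\lambda I\le D^2u_0\le\Lambda I$ on $\mathbb R^n$. Since $M_{\nabla u_0}$ is a translating soliton with velocity $\tfrac1n(a;b)$, under mean curvature flow the evolving submanifold at time $t$ is $M_{\nabla u_0}+\tfrac tn(a;b)$; re-graphing this over $\mathbb R^n$, which absorbs the tangential reparametrisation, leads to the ansatz
\[
u(y,t)=u_0\!\Big(y-\tfrac tn a\Big)+\tfrac tn\,\langle b,y\rangle-\tfrac{t^{2}}{2n^{2}}\,\langle a,b\rangle+\tfrac cn\,t ,
\]
and using \eqref{e1.3} to rewrite $\tfrac1n\ln\det D^2u_0$ one checks directly that this solves \eqref{e1.6} with $u(\cdot,0)=u_0$. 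As $D^2u(y,t)=D^2u_0(y-\tfrac tn a)$, each $u(\cdot,t)$ again satisfies $\circledS$, so by the uniqueness in Proposition~\ref{p3.1} this is \emph{the} solution and Proposition~\ref{t1.2a} applies to it. Three $y$-derivatives annihilate the affine terms, hence $D^3u(y,t)=D^3u_0(y-\tfrac tn a)$, and \eqref{e1.7a} with $l=3$ gives, for all $z\in\mathbb R^n$ and all $t\ge\epsilon_0$,
\[
|D^3u_0(z)|^{2}=\big|D^3u\big(z+\tfrac tn a,\,t\big)\big|^{2}\le\frac Ct .
\]
Letting $t\to\infty$ yields $D^3u_0\equiv0$, i.e. $u$ is a quadratic polynomial.

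It remains to establish condition $\circledS$ from the hypotheses of the theorem; this is the heart of the matter, and where \eqref{e1.511} and the symmetry $u_0(Ax)=u_0(x)$ enter. Strict convexity together with \eqref{e1.511} already gives $D^2u_0\ge c_RI$ on each ball $B_R$, but only with $c_R\downarrow0$, so the pointwise hypothesis must be upgraded to a uniform one. Differentiating the symmetry relation gives $\nabla u_0(Ax)=A\nabla u_0(x)$ and $D^2u_0(Ax)=AD^2u_0(x)\,A^{T}$; substituting into \eqref{e1.3} and using $|\det A|=1$ forces the identity $D^2u_0(x)\,(a-A^{T}a)=b-A^{T}b$ for all $x$, while differentiating \eqref{e1.3} itself gives $\nabla\!\left(\ln\det D^2u_0\right)=-D^2u_0\,a+b$. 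Using these, one controls $\ln\det D^2u_0$ — and hence the extreme eigenvalues of $D^2u_0$ — along the rays issuing from a point in the directions $-A^{k}a$, $0\le k\le l_A-1$; because $l_A\ge3$ these are at least three directions spread around the relevant $2$-plane, the factor $\cos\frac{\pi}{l_A}$ being precisely the efficiency with which such an orbit positively covers nearby directions, while $n-1$ accounts for the angular part of a radial barrier. Comparing the resulting differential inequality with that barrier, the number $\frac{n-1}{|a|\cos(\pi/l_A)}$ appearing in \eqref{e1.511} is exactly the borderline value that closes the comparison and upgrades \eqref{e1.511} to a uniform bound $\mu\ge\lambda>0$; the uniform upper bound $D^2u_0\le\Lambda I$ then follows from the same circle of ideas (the identity above together with the fact that quantities of the form $\langle a,D^2u_0a\rangle$, propagated around the $\langle A\rangle$-orbit, control $D^2u_0$ in a spanning set of directions).

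The main obstacle is this last step: squeezing a \emph{uniform} strict-convexity bound out of \eqref{e1.511}, which a priori pins $\mu$ down only like $|x|^{-1}$ at infinity. That is where the rotational symmetry and the sharp constant are genuinely used, and where care is needed — especially in making the orbit/barrier comparison rigorous and in tracking $D^2u_0$ along the relevant rays. Once $\circledS$ is in hand, the theorem reduces to the explicit flow above together with Propositions~\ref{p3.1} and \ref{t1.2a}, which is why Theorem~\ref{t1.1} is billed as an application of Proposition~\ref{t1.2a}.
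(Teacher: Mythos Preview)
Your reduction to condition~$\circledS$ is correct and matches the paper exactly (this is Lemma~\ref{l2.1} together with Lemma~\ref{l2.2}): once a two-sided Hessian bound is known, the explicit translated flow and the decay estimate of Proposition~\ref{t1.2a} force $D^{3}u\equiv0$.

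The genuine gap is your derivation of $\circledS$, and here the paper takes a route quite different from your sketch. The hypotheses \eqref{e1.511} and $u(Ax)=u(x)$ are \emph{not} used to upgrade the lower bound $\mu(x)\gtrsim|x|^{-1}$ to a uniform one; they are first used to prove an \emph{upper} bound $D^{2}u\le CI$ via a Pogorelov-type interior estimate (Lemma~\ref{l1.3}). One maximises $\eta_{k}(|x|)\,u_{\gamma\gamma}$ over $\mathbb R^{n}\times\mathbb S^{n-1}$ with a cutoff $\eta_{k}$, differentiates \eqref{e1.3} twice, diagonalises at the maximum point $p$, and after standard cancellations the term $-\eta_{k}a_{i}u_{i11}=2k\langle -a,p\rangle u_{11}$ survives. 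The only role of the symmetry is to relocate $p$ within its $\langle A\rangle$-orbit so that $\langle p,-a\rangle\ge|a|\,|p|\cos\frac{\pi}{l_A}$; combined with $u_{ii}(p)\ge\lambda/|p|$ for $i\ge2$ (this is precisely \eqref{e1.511}), the good term beats the bad term $\sum_{i\ge2}u^{ii}u_{11}\le\frac{n-1}{\lambda}|p|\,u_{11}$ exactly when $\lambda>\frac{n-1}{|a|\cos(\pi/l_A)}$, and the upper bound follows after $k\to0$.

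The uniform \emph{lower} bound then comes by Legendre duality, not by a ray comparison: the transform $u^{*}$ solves an equation of the same shape with $a$ and $b$ interchanged, inherits the symmetry $u^{*}(A\tilde x)=u^{*}(\tilde x)$, and from $D^{2}u\le CI$ satisfies $D^{2}u^{*}\ge C^{-1}I$, so the analogue of \eqref{e1.511} for $u^{*}$ holds trivially. This is where $|b|\neq0$ is actually used. Applying Lemma~\ref{l1.3} to $u^{*}$ gives $D^{2}u^{*}\le CI$, i.e.\ $D^{2}u\ge C^{-1}I$, and $\circledS$ is established. Your identity $D^{2}u(x)(a-A^{T}a)=b-A^{T}b$ is correct but is not what drives the proof; in particular it is vacuous whenever $a$ lies in the fixed space of $A$, whereas the Pogorelov argument works regardless.
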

By the methods in the proof of Theorem \ref{t1.1}, we formulate the above result in a more general form when the dimension $n=1$:
\begin{cor}\label{c1.1}
Suppose that $u=u(t)$ satisfies
\begin{equation}\label{e1.4} u''=\exp (- a_0
u' +b_0t+c) \qquad
\hbox{on}\qquad\mathbb R,
\end{equation}
where $a_{0}b_0>0$ and there exists $t_{0}$ such that $u(t-t_{0})=u(t_{0}-t)$
for each $t\in \mathbb R$. Then
\begin{equation*}\label{e1.5}
 u=\frac{b_0}{2a_0}(t-t_{0})^{2}+\min_{\mathbb R}u.
\end{equation*}
\end{cor}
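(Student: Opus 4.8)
The plan is to exploit the special feature of dimension one: equation \eqref{e1.4} is a first-order ODE for $v:=u'$ and can be integrated in closed form, so that the reflection symmetry becomes an algebraic identity pinning down the single constant of integration. No analogue of the global/asymptotic machinery behind Theorem \ref{t1.1} should be needed.

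First I would record the elementary consequences of the hypotheses. Since $u''=\exp(-a_0u'+b_0t+c)>0$, the solution $u$ is strictly convex (and smooth by bootstrapping). Differentiating $u(t_0+s)=u(t_0-s)$ in $s$ and setting $s=0$ gives $u'(t_0)=0$; because $u'$ is strictly increasing, $t_0$ is the unique minimum point of $u$, and differentiating once more yields $u'(t_0+s)=-u'(t_0-s)$ for all $s\in\mathbb R$. Next, rewrite \eqref{e1.4} as $\tfrac{1}{a_0}\bigl(e^{a_0v}\bigr)'=e^{b_0t+c}$ and integrate from $t_0$ to $t$, using $v(t_0)=0$, to obtain
\begin{equation*}
e^{a_0v(t)}=1+\frac{a_0}{b_0}\bigl(e^{b_0t+c}-e^{b_0t_0+c}\bigr)=p\,e^{b_0(t-t_0)}+K=:f(t),
\end{equation*}
where $p:=\tfrac{a_0}{b_0}e^{b_0t_0+c}>0$ (here $a_0b_0>0$ is used) and $K:=1-p$, so that in particular $f(t_0)=p+K=1$.

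The heart of the argument is to feed the symmetry $v(t_0+s)=-v(t_0-s)$, equivalently $f(t_0+s)\,f(t_0-s)=1$ for all $s$, into this formula. Expanding,
\begin{equation*}
f(t_0+s)\,f(t_0-s)=p^2+K^2+pK\bigl(e^{b_0s}+e^{-b_0s}\bigr),
\end{equation*}
and since the functions $1$ and $\cosh(b_0s)$ are linearly independent (as $b_0\neq0$), this identity forces $pK=0$ and $p^2+K^2=1$; with $p>0$ we conclude $K=0$ and $p=1$. Hence $e^{a_0v(t)}=e^{b_0(t-t_0)}$, i.e.\ $u'(t)=\tfrac{b_0}{a_0}(t-t_0)$, and one last integration gives $u(t)=\tfrac{b_0}{2a_0}(t-t_0)^2+u(t_0)$; since $\tfrac{b_0}{2a_0}>0$ the minimum is attained at $t_0$ and equals $u(t_0)$, which is exactly the asserted formula.

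The only step that is not pure bookkeeping is the deduction $K=0$ from the functional equation. If one prefers to avoid the linear-independence remark, one can instead differentiate $f(t_0+s)f(t_0-s)=1$ twice and evaluate at $s=0$, obtaining $f''(t_0)f(t_0)=f'(t_0)^2$, which reads $b_0^2p=b_0^2p^2$ and again forces $p=1$, $K=0$. I do not expect any genuine obstacle here: the one-dimensional equation is exactly solvable, and the symmetry hypothesis is precisely strong enough to kill the extra integration constant.
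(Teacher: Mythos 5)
Your proof is correct, but it takes a genuinely different and far more elementary route than the paper. The paper treats the corollary as a one-dimensional instance of the Theorem~\ref{t1.1} machinery: the symmetry is used only to place the maximum point of the Pogorelov-type test function so that the analogue of (\ref{e2.2}) holds, which yields the upper Hessian bound (\ref{e2.1}); the matching lower bound then comes from the Legendre transform, and the conclusion from Lemma~\ref{l2.2}, i.e.\ ultimately from the decay estimates for the logarithmic Monge--Amp\`ere flow. You instead observe that for $n=1$ equation (\ref{e1.4}) integrates in closed form: with $v=u'$ one gets $e^{a_0v(t)}=p\,e^{b_0(t-t_0)}+(1-p)$, and the oddness of $v$ about $t_0$ (equivalently $f(t_0+s)f(t_0-s)\equiv 1$) forces $p=1$, hence $u'=\tfrac{b_0}{a_0}(t-t_0)$. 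Your computations check out (including the fallback argument $f''(t_0)f(t_0)=f'(t_0)^2$), and the hypothesis $a_0b_0>0$ enters exactly where it should, in $p>0$. What your approach buys is a short, self-contained, fully rigorous proof that needs neither the flow estimates nor any a priori Hessian bounds, and it makes transparent that the symmetry is precisely what kills the one free integration constant (the non-symmetric global solutions with $0<p<1$ show the hypothesis cannot be dropped); what it does not buy is any indication of how to handle $n\geq 2$, which is the point of the paper's uniform treatment. One minor remark on the statement rather than the proof: you read the hypothesis as $u(t_0+s)=u(t_0-s)$, i.e.\ symmetry about $t_0$, whereas the displayed condition $u(t-t_0)=u(t_0-t)$ literally asserts evenness about the origin; your reading is the one consistent with the stated conclusion and with the paper's reduction to $t_0=0$, but it is worth flagging.
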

If the potential function $u$ has more symmetry, it is easy to get
\begin{theorem}\label{t1.2}
Let $u$ be a smooth strictly convex radially symmetric solution of (\ref{e1.3}),
then  $u(x)$ must be a quadratic polynomial.
\end{theorem}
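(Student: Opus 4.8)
The plan is to exploit the rotational symmetry to reduce (\ref{e1.3}) to an elementary ordinary differential equation. Since $u$ is radially symmetric I would write $u(x)=\phi(r)$ with $r=|x|$; smoothness of $u$ at the origin gives $\phi'(0)=0$, while strict convexity gives $\phi''(r)>0$ and $\phi'(r)/r>0$ for $r>0$. The first step is to note that the eigenvalues of $D^{2}u$ are $\phi''(r)$ in the radial direction and $\phi'(r)/r$ with multiplicity $n-1$, so that
\[
\det D^{2}u=\phi''(r)\left(\frac{\phi'(r)}{r}\right)^{n-1}
\]
is a function of $r$ alone, whereas $\partial u/\partial x_{i}=\phi'(r)x_{i}/r$ makes the exponent on the right-hand side of (\ref{e1.3}) equal to $\left\langle b-\frac{\phi'(r)}{r}a,\,x\right\rangle+c$. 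Comparing the two sides for a fixed $r>0$, the linear functional $x\mapsto\left\langle b-\frac{\phi'(r)}{r}a,\,x\right\rangle$ is forced to be constant on the whole sphere $\{|x|=r\}$, which for $n\ge 2$ means the vector $b-\frac{\phi'(r)}{r}a$ vanishes; hence $b=\frac{\phi'(r)}{r}\,a$ for every $r>0$.

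Next I would split into two cases. If $a\neq 0$, taking the inner product of the last identity with $a$ shows that $\phi'(r)/r\equiv\langle a,b\rangle/|a|^{2}=:k$ is a constant, so $\phi(r)=\frac{k}{2}r^{2}+\phi(0)$ and $u(x)=\frac{k}{2}|x|^{2}+u(0)$ is a quadratic polynomial; strict convexity and the consistency of (\ref{e1.3}) then give $k=e^{c/n}>0$. If $a=0$, the identity forces $b=0$ and (\ref{e1.3}) becomes $\phi''(r)\left(\phi'(r)/r\right)^{n-1}=e^{c}$; rewriting this as $\frac{1}{n}\bigl(\phi'(r)^{n}\bigr)'=e^{c}r^{n-1}$ and integrating from $0$, with $\phi'(0)=0$ discarding the constant of integration, yields $\phi'(r)^{n}=e^{c}r^{n}$, i.e. $\phi'(r)=e^{c/n}r$, so again $u(x)=\frac{1}{2}e^{c/n}|x|^{2}+u(0)$ is a quadratic polynomial. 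The one-dimensional case $n=1$, where ``radially symmetric'' means $u(t)=u(-t)$, is handled in the same way (evenness of the right-hand side of (\ref{e1.4}) forces $a_{0}u'(t)=b_{0}t$) and is contained in Corollary \ref{c1.1} with $t_{0}=0$.

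The step I expect to require the most care is the reduction itself: passing from equality of the two sides of (\ref{e1.3}) to the pointwise vanishing of $b-\frac{\phi'(r)}{r}a$ (which uses that a linear functional constant on a positive-radius sphere in $\mathbb{R}^{n}$, $n\ge 2$, is identically zero), and then using the regularity condition $\phi'(0)=0$ to discard the constant of integration in the case $a=0$ --- without it one would also produce non-polynomial radial solutions on $\mathbb{R}^{n}\setminus\{0\}$. It is worth emphasizing that, in contrast with Theorem \ref{t1.1}, this argument needs neither the decay estimates of Proposition \ref{t1.2a} nor any J\"{o}rgens--Calabi--Pogorelov type input: the extra rotational symmetry alone forces $a$ and $b$ to be collinear, after which the equation is an explicitly solvable ODE.
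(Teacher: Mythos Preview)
Your argument is correct and is genuinely different from the paper's proof. The paper does \emph{not} reduce to an ODE; instead it observes that $\Psi:=\ln\det D^{2}u$ is a radial function satisfying the linear elliptic equation $u^{ij}\Psi_{ij}-\langle a,D\Psi\rangle=0$ (obtained by differentiating (\ref{e1.3})), invokes the strong maximum principle to conclude that $\Psi$ is constant, and then appeals to the classical J\"orgens--Calabi--Pogorelov theorem to finish. Your route, by contrast, exploits radial symmetry directly on the \emph{right-hand side} of (\ref{e1.3}): once you see that a linear functional constant on a sphere of positive radius must vanish, you get the algebraic constraint $b=\tfrac{\phi'(r)}{r}\,a$, and the rest is elementary calculus. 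In particular you avoid both the maximum principle and any J\"orgens--Calabi--Pogorelov input, contrary to the paper's argument; this makes your proof shorter and entirely self-contained. The paper's approach, on the other hand, is closer in spirit to the rest of the article and would generalize more readily to any situation in which $\Psi$ is known to have an interior extremum.

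Two small clean-ups. First, in the case $a\neq 0$, your identity $b=\tfrac{\phi'(r)}{r}a$ also covers the degenerate possibility that $b$ is not a multiple of $a$ (then no radially symmetric strictly convex solution exists and the statement is vacuous) or that $b=0$ (then $\phi'\equiv 0$, again impossible); you might say this in one line. Second, your appeal to Corollary~\ref{c1.1} for $n=1$ is slightly off, since that corollary assumes $a_{0}b_{0}>0$; but your own parenthetical argument (evenness forces $a_{0}u'(t)=b_{0}t$, and if $a_{0}=0$ then $b_{0}=0$ and $u''=e^{c}$) already handles every case, so simply drop the cross-reference.
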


We outline our proof as follows. In section 2, we provide preliminary results which will be used in the proof of Theorem 1.1. The techniques used in this section are reflective of those in \cite{HW}, but the corresponding  prior estimates to the solutions in the current scenario need modification because the structure of (\ref{e1.3})  is unlike the self-shrinking equation (\ref{e1.12}).  In section 3, we give the proofs of the main results.

\section{Preliminaries}
Straightforward computation gives the relations of
(\ref{e1.6}) and (\ref{e1.3}).
\begin{lemma}\label{l2.1}
If $u$ is a smooth strictly convex solution of the PDE (\ref{e1.3}) and
define $\tilde{u}(x,t)=u(x-at)+(\langle b,x\rangle-
\frac{1}{2}\langle b,a\rangle t+c)t$. Then $\tilde{u}(x,t)$
satisfies the logarithmic  Monge-Amp\`{e}re flow
\begin{equation*}
\left\{ \begin{aligned}\frac{\partial \tilde{u}}{\partial t}-\ln
\det D^{2}\tilde{u}&=0,
& t>0,\quad x\in \mathbb{R}^{n}, \\
 \tilde{u}&=u(x), & t=0,\quad x\in \mathbb{R}^{n}.
\end{aligned} \right.
\end{equation*}
\end{lemma}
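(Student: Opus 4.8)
Since the statement is billed as a straightforward computation, the plan is to substitute the proposed $\tilde u$ directly into the flow equation and invoke that $u$ solves (\ref{e1.3}). The first observation I would record is that, for each fixed $t$, the correction term $(\langle b,x\rangle-\tfrac12\langle b,a\rangle t+c)\,t$ is affine in the spatial variable $x$, so it drops out under $D^2$ in $x$; hence $D^2\tilde u(x,t)=D^2 u(x-at)$. This simultaneously shows that $\tilde u(\cdot,t)$ is strictly convex (so the equation makes sense) and that $\det D^2\tilde u(x,t)=\det D^2 u(x-at)$, which is exactly the quantity I will need on the right-hand side of the flow.

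Next I would differentiate $\tilde u$ in $t$ by the chain rule. The travelling-wave piece contributes $\partial_t\,u(x-at)=-\sum_{i=1}^n a_i\frac{\partial u}{\partial x_i}(x-at)$, and the remaining piece, written out as $\langle b,x\rangle t-\tfrac12\langle b,a\rangle t^2+c\,t$, contributes $\langle b,x\rangle-\langle b,a\rangle t+c=\sum_{i=1}^n b_i(x_i-a_it)+c$. Summing these,
$$\frac{\partial \tilde u}{\partial t}(x,t)=-\sum_{i=1}^n a_i\frac{\partial u}{\partial x_i}(x-at)+\sum_{i=1}^n b_i(x_i-a_it)+c.$$

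Finally I would evaluate (\ref{e1.3}) at the shifted point $y=x-at$ and take logarithms, obtaining $\ln\det D^2 u(x-at)=-\sum_{i=1}^n a_i\frac{\partial u}{\partial x_i}(x-at)+\sum_{i=1}^n b_i(x_i-a_it)+c$, which coincides with the expression for $\partial_t\tilde u$ just computed; combined with $\det D^2\tilde u(x,t)=\det D^2 u(x-at)$ from the first step, this yields $\partial_t\tilde u-\ln\det D^2\tilde u=0$ for $t>0$. Setting $t=0$ annihilates the correction term and leaves $\tilde u(x,0)=u(x)$, the claimed initial condition. There is no real analytic obstacle here: the only points requiring care are keeping the argument $x-at$ consistent when invoking (\ref{e1.3}), correctly differentiating the quadratic-in-$t$ correction term, and noting that smoothness of $u$ transfers to $\tilde u$ so that the identity holds in the classical sense.
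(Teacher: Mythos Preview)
Your proof is correct and is precisely the straightforward computation the paper alludes to (the paper gives no written proof beyond the phrase ``Straightforward computation gives the relations of (\ref{e1.6}) and (\ref{e1.3})''). The three steps you outline---noting the correction term is affine in $x$ so $D^2\tilde u(x,t)=D^2u(x-at)$, computing $\partial_t\tilde u$ by the chain rule, and then evaluating (\ref{e1.3}) at the shifted point---are exactly what is needed, and your bookkeeping of the $t$-derivative of the quadratic correction term is accurate.
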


An important consequence of the decay estimates (\ref{e1.7a}) is the following result.
\begin{lemma}\label{l2.2}
If $u$ is a smooth strictly convex solution of the PDE (\ref{e1.3}) and satisfies
condition $\circledS$. Then $u(x)$ must be a quadratic polynomial.
\end{lemma}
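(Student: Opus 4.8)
The plan is to reduce the elliptic equation (\ref{e1.3}) to the logarithmic Monge-Amp\`{e}re flow through Lemma \ref{l2.1} and then extract the rigidity directly from the interior decay estimate (\ref{e1.7a}). Given a smooth strictly convex solution $u$ of (\ref{e1.3}) satisfying condition $\circledS$, Lemma \ref{l2.1} produces the function
$$\tilde{u}(x,t)=u(x-at)+\Big(\langle b,x\rangle-\tfrac{1}{2}\langle b,a\rangle t+c\Big)t,$$
which solves the logarithmic Monge-Amp\`{e}re flow with initial value $u$. The feature to exploit is that the second summand is affine in $x$ for each fixed $t$, so that $D^{l}_{x}\tilde{u}(x,t)=(D^{l}u)(x-at)$ for every $l\geq 2$. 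In particular $\tilde{u}(\cdot,t)$ is strictly convex and satisfies $\Lambda I\geq D^{2}_{x}\tilde{u}(\cdot,t)\geq\lambda I$, that is, condition $\circledS$, for every $t\geq 0$, with the same constants $\lambda,\Lambda$ as $u$.

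With this in place I would apply the decay estimate of Proposition \ref{t1.2a} to $\tilde{u}$ (after a harmless reparametrization of time to put the flow into the normalization of (\ref{e1.6}); this alters neither the uniform Hessian bounds nor the range of $t$). Taking $l=3$ and $\epsilon_{0}=1$ in (\ref{e1.7a}) gives a constant $C=C(n,\lambda,\Lambda)$ with
$$\sup_{x\in\mathbb{R}^{n}}\big|D^{3}_{x}\tilde{u}(x,t)\big|^{2}\leq\frac{C}{t}\qquad\text{for all }t\geq 1.$$
Because $D^{3}_{x}\tilde{u}(x,t)=(D^{3}u)(x-at)$ and the translation $x\mapsto x-at$ is a bijection of $\mathbb{R}^{n}$ for each fixed $t$, the left-hand side equals $\sup_{y\in\mathbb{R}^{n}}|D^{3}u(y)|^{2}$, which does not depend on $t$. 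Letting $t\to\infty$ forces this supremum to be $0$, hence $D^{3}u\equiv 0$ on $\mathbb{R}^{n}$, so $D^{2}u$ is constant and $u$ is a quadratic polynomial.

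The scheme is short once Propositions \ref{p3.1} and \ref{t1.2a} are available, and there is no serious obstacle at this stage; the one point that genuinely requires attention is that $\tilde{u}(\cdot,t)$ must stay in the class $\circledS$ uniformly in $t$, so that the decay estimate (\ref{e1.7a}) applies all along the flow with time-independent constants. Here this is automatic, since the spatial Hessian of $\tilde{u}$ is merely a translate of $D^{2}u$, which is precisely the virtue of the substitution in Lemma \ref{l2.1}. The analogue of this bookkeeping becomes substantially more delicate once the orthogonal symmetry $u(Ax)=u(x)$ of Theorem \ref{t1.1} must be propagated through the flow while retaining quantitative control, and that is where the real work of Section 3 will lie.
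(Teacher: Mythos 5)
Your argument is exactly the paper's proof of Lemma \ref{l2.2}: pass to $\tilde{u}$ via Lemma \ref{l2.1}, apply the decay estimate (\ref{e1.7a}) with $l=3$, note that $D^{3}_{x}\tilde{u}(x,t)=(D^{3}u)(x-at)$ so the supremum over $x$ is $t$-independent, and let $t\to\infty$ to conclude $D^{3}u\equiv0$. Your added remarks on the time normalization and on $\tilde{u}(\cdot,t)$ remaining in class $\circledS$ are correct bookkeeping that the paper leaves implicit.
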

\begin{proof}
By Lemma \ref{l2.1} and Proposition \ref{t1.2a}, we have
\begin{equation*}
\sup_{x\in\mathbb{R}^{n}}|D^{3}\tilde{u}(x,t)|^{2}\leq
\frac{C}{t}, \qquad \forall t\geq \epsilon_{0}.
\end{equation*}
That is
\begin{equation*}
\sup_{x\in\mathbb{R}^{n}}|D^{3}u(x-at)|^{2}\leq
\frac{C}{t}, \qquad \forall t\geq \epsilon_{0}.
\end{equation*}
Therefore
\begin{equation*}
\sup_{x\in\mathbb{R}^{n}}|D^{3}u(x)|^{2}\leq
\frac{C}{t}, \qquad \forall t\geq \epsilon_{0}.
\end{equation*}
Let $t\rightarrow+\infty$ then we obtain $D^{3}u\equiv0$
and the claim follows.
\end{proof}
To obtain the first rigidity theorem we search that which
condition can imply condition $\circledS$. Denote $B_{R}$  be a ball centered at $0$ with
radius $R$ in $\mathbb{R}^{n}$.
 \begin{lemma}\label{l1.3}
Let $u :\mathbb{R}^{n}\rightarrow \mathbb{R}$ be a  smooth
strictly convex solution to (\ref{e1.3}) and  $|a|\neq 0$.
Suppose that  there exists an orthogonal matrix $A$
such that $l_A\geq 3$ and $u(Ax)=u(x)$ for  each
$x\in\mathbb{R}^{n}$. If the  smallest eigenvalue $\mu(x)$
of $D^{2}u$ satisfies $(\ref{e1.511})$.
Then there exists a positive constant $R_{0}$ such that
\begin{equation}\label{e2.1}
D^{2}u(x)\leq C I,\qquad x\in
\mathbb{R}^{n},
\end{equation}
where $C$ is a positive constant depending only
on $|a|$, $l_A$, $\mu(x)$ and $\|u\|_{C^{2}(\bar{B}_{R_{0}+1})}$.
\end{lemma}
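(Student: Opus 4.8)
\emph{Plan.} Write $\mu(x)=\lambda_1(x)\le\lambda_2(x)\le\cdots\le\lambda_n(x)$ for the eigenvalues of $D^{2}u(x)$. Since $\det D^{2}u(x)=\prod_i\lambda_i(x)\ge\mu(x)^{\,n-1}\lambda_n(x)$, proving $(\ref{e2.1})$ amounts to bounding the largest eigenvalue through
\begin{equation*}
\lambda_n(x)\ \le\ \frac{\det D^{2}u(x)}{\mu(x)^{\,n-1}},
\end{equation*}
and by $(\ref{e1.3})$ we have $\det D^{2}u(x)=\exp\!\big(-\langle a,\nabla u(x)\rangle+\langle b,x\rangle+c\big)$. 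Thus the whole task is to show that, for $|x|$ large, this exponential decays faster than $\mu(x)^{\,n-1}$. Outside a large ball $(\ref{e1.511})$ makes $\mu(x)^{\,n-1}$ comparable to $|x|^{-(n-1)}$, so it suffices to prove $\det D^{2}u(x)\le C\,|x|^{-(n-1)-\varepsilon}$ for some $\varepsilon>0$; inside a fixed ball the bound is trivial from $\|u\|_{C^{2}}$.

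\emph{Step 1 (quantitative form of $(\ref{e1.511})$ and of convexity).} From $(\ref{e1.511})$ fix $\gamma$ with $n-1<\gamma\,|a|\cos\frac{\pi}{l_A}$ and $R_{0}\ge1$ with $|x|\mu(x)\ge\gamma$ for $|x|\ge R_{0}$, so that $\mu(x)^{\,n-1}\ge\gamma^{\,n-1}|x|^{-(n-1)}$ there. For a unit vector $e$ and $r\ge R_{0}$ one has $\frac{d^{2}}{dr^{2}}u(re)=\langle D^{2}u(re)e,e\rangle\ge\mu(re)\ge\gamma/r$; integrating twice and using that $u$ and $|\nabla u|$ are bounded on $\bar B_{R_{0}}$ gives, uniformly in $e$,
\begin{equation*}
\langle\nabla u(re),\,e\rangle\ \ge\ \gamma\log r-C_{0},\qquad r\ge R_{0},
\end{equation*}
with $C_{0}=C_{0}\!\left(\gamma,R_{0},\|u\|_{C^{1}(\bar B_{R_{0}})}\right)$. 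So the radial derivative of $u$ grows at least logarithmically at infinity.

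\emph{Step 2 (using $u(Ax)=u(x)$).} Differentiation gives $\nabla u(Ax)=A\nabla u(x)$ and $D^{2}u(x)=A^{\top}D^{2}u(Ax)A$, hence $\det D^{2}u$ is $A$-invariant; comparing with $(\ref{e1.3})$, the function $\psi(x):=\langle a,\nabla u(x)\rangle-\langle b,x\rangle=c-\log\det D^{2}u(x)$ is $A$-invariant. Summing the identities $\psi(A^{k}x)=\psi(x)$ over $0\le k\le l_A-1$ and using that $\sum_{k}(A^{\top})^{k}$ equals $l_A$ times the orthogonal projection $P$ onto $\ker(A-I)$ yields $\langle a-Pa,\nabla u(x)\rangle=\langle b-Pb,x\rangle$ for all $x$, whence $D^{2}u(x)(a-Pa)\equiv b-Pb$ and $\psi(x)=\langle Pa,\nabla u(x)\rangle-\langle Pb,x\rangle$; this rigidity of $D^{2}u$ in the directions transverse to $\ker(A-I)$ is what eventually neutralises the otherwise uncontrolled linear inhomogeneity. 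On the other hand, fixing $x\ne0$ with $e=x/|x|$ and using $\psi(A^{k}x)=\psi(x)$ together with $\nabla u(A^{k}x)=A^{k}\nabla u(x)$ one gets $\langle a,\nabla u(x)\rangle=\langle A^{-k}a,\nabla u(x)\rangle+\langle(I-A^{-k})b,x\rangle$; averaging over a convex combination $\sum_{k}\theta_{k}A^{-k}$ concentrated on at most two consecutive members of the $\langle A\rangle$-orbit of $a/|a|$, one can arrange $\sum_{k}\theta_{k}A^{-k}(a/|a|)=\rho\,e$ with $\rho\ge\cos\frac{\pi}{l_A}$ — the only point where $l_A\ge3$ is used, and the source of the constant $\cos\frac{\pi}{l_A}$. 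Combining this with Step 1 (and with the rigidity above to dispose of the residual $\langle b,\cdot\rangle$ terms) gives, for $|x|\ge R_{1}\ (\ge R_{0})$,
\begin{equation*}
\psi(x)\ \ge\ |a|\cos\tfrac{\pi}{l_A}\,\bigl(\gamma\log|x|-C_{0}\bigr)-C_{1}.
\end{equation*}

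\emph{Step 3 (conclusion) and main obstacle.} From the last inequality $\det D^{2}u(x)=e^{\,c-\psi(x)}\le C_{2}\,|x|^{-\gamma|a|\cos(\pi/l_A)}$, so for $|x|\ge R_{1}$,
\begin{equation*}
\lambda_n(x)\ \le\ \frac{\det D^{2}u(x)}{\mu(x)^{\,n-1}}\ \le\ \frac{C_{2}}{\gamma^{\,n-1}}\,|x|^{\,(n-1)-\gamma|a|\cos(\pi/l_A)}\ \longrightarrow\ 0\qquad(|x|\to\infty),
\end{equation*}
since the exponent is negative by the choice of $\gamma$. Hence $\lambda_n\le C$ on $\{|x|\ge R_{1}\}$, while on $\bar B_{R_{1}}\subset\bar B_{R_{0}+1}$ (after enlarging $R_{0}$ so that $R_{1}\le R_{0}+1$) one has $\lambda_n(x)\le\|u\|_{C^{2}(\bar B_{R_{0}+1})}$; together these give $D^{2}u(x)\le CI$ on $\mathbb{R}^{n}$ with $C$ depending only on $|a|$, $l_A$, on $\mu(\cdot)$ through $\gamma$ and $R_{0}$, and on $\|u\|_{C^{2}(\bar B_{R_{0}+1})}$. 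I expect the genuine difficulty to be precisely the interplay in Step 2 between the two mechanisms: the inhomogeneous term $\langle b,x\rangle$ is a priori of size $|x|$ and would swamp the logarithmic gain from the radial derivative, so one must carefully use the symmetry-forced rigidity of $D^{2}u$ transverse to $\ker(A-I)$ to kill it while still retaining the factor $\cos\frac{\pi}{l_A}$ extracted from the cyclic group $\langle A\rangle$; making that bookkeeping precise (and verifying it survives in all dimensions $n\ge2$) is the heart of the matter.
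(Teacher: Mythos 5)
There is a genuine gap, and in fact the intermediate target of your reduction is false for every function in the class. You reduce the lemma to showing $\det D^{2}u(x)\le C|x|^{-(n-1)-\varepsilon}$, i.e.\ $\psi(x):=\langle a,\nabla u(x)\rangle-\langle b,x\rangle\ge((n-1)+\varepsilon)\log|x|-C$. Take $n=2$, $A$ the rotation by $2\pi/3$ (so $l_A=3$ and $P=0$), $u(x)=\tfrac{m}{2}|x|^{2}$, $b=ma$, $c=2\log m$: all hypotheses of the lemma hold ($(\ref{e1.511})$ is satisfied since $|x|\mu(x)=m|x|\to\infty$), yet $\psi\equiv 0$ and $\det D^{2}u\equiv m^{2}$, so neither your Step 2 lower bound nor the Step 3 conclusion $\lambda_n(x)\to 0$ can be true. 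More generally, Theorem \ref{t1.1} asserts that every admissible $u$ is a quadratic polynomial, for which $\det D^{2}u$ is a positive constant, so the decay you aim for never occurs. The failure is exactly at the point you flag: the rigidity $D^{2}u(x)(a-Pa)\equiv b-Pb$ removes only the component of $b$ orthogonal to $\ker(A-I)$, and the surviving term $-\langle Pb,x\rangle$ in $\psi$ is linear in $|x|$ and swamps the $O(\log|x|)$ gain of Step 1 (when $P=0$ it instead annihilates the gain altogether). In addition, the averaging claim that some convex combination $\sum_k\theta_kA^{-k}(a/|a|)$ equals $\rho e$ with $\rho\ge\cos\frac{\pi}{l_A}$ is false for general $e$, e.g.\ when $e$ is orthogonal to the span of the $\langle A\rangle$-orbit of $a$.

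The structural reason the paper's argument does not meet this obstruction is that it works one derivative higher: it applies the maximum principle to $w=\eta_k u_{\xi\xi}$ with $L=u^{ij}\partial_i\partial_j$, and after differentiating the logarithm of $(\ref{e1.3})$ twice the identity $u^{ij}u_{11ij}=\sum u_{ij1}^{2}/(u_{ii}u_{jj})-a_iu_{i11}$ contains no trace of $b$ at all; the symmetry $u(Ax)=u(x)$ is used only to replace the maximum point $p$ by a point of its orbit satisfying $\langle p,-a\rangle\ge|a||p|\cos\frac{\pi}{l_A}$, which is where $l_A\ge 3$ and the constant $\cos\frac{\pi}{l_A}$ enter. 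Any first-order strategy such as yours must produce an unconditional lower bound for $\langle Pa,\nabla u(x)\rangle-\langle Pb,x\rangle$, which is essentially equivalent to the conclusion being proved; I recommend redoing the argument at the level of $u_{\gamma\gamma}$.
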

\begin{proof}
Denote $$u_{i}=\frac{\partial u}{\partial x_{i}},\,\,\, u_{ij}
=\frac{\partial^{2}u}{\partial x_{i}\partial x_{j}},\,\,\,
u_{ijk}=\frac{\partial^{3}u}{\partial x_{i}\partial x_{j}
\partial x_{k}}, \cdots $$ and
$$[u^{ij}]=[ u_{ij}]^{-1},\quad L=u^{ij}
\frac{\partial^{2}}{\partial x_{i}\partial x_{j}}.$$
Let $\gamma$ denote a unite vector field. Set
$$u_{\gamma}=D_{\gamma}u,\,\,\,u_{\gamma\gamma}
=D^{2}_{\gamma\gamma}u.$$
We will prove that
$$\sup_{x\in \mathbb{R}^{n},\,\gamma\in
\mathbb{S}^{n-1}}u_{\gamma\gamma}\leq C.$$
By (\ref{e1.511}), there are some constant
$\lambda> \dfrac{n-1}{|a|\cos\dfrac{\pi}{l_A}}$  and
$R_{0}$, such that
$$|x|\mu(x)\geq \lambda,$$
for $|x|>R_{0}+1$. One can define a family of smooth functions by
$$f_{k}(t)=\left\{ \begin{aligned}
&1 , & 0\leq t\leq R_{0}, \\
& \varphi &R_{0}\leq t\leq R_{0}+1,\\
&-k [t^{2}-(R_{0}+1)^{2}]+\frac{3}{4}, & t\geq R_{0}+1,
\end{aligned} \right.$$
where $0<k\leq 1$,  and $(t,\varphi(t))$ is a smooth curve
connecting two points $(R_{0},1)$, $(R_{0}+1,\dfrac{3}{4})$
satisfying $\dfrac{3}{4}\leq \varphi\leq 1$.

We view $u_{\gamma\gamma}$ as a function on
$\mathbb{R}^{n}\times\mathbb{S}^{n-1}$. It is easy to see that
$f_{k}(|x|)u_{\gamma\gamma}$ always attains its maximum at
$$(p,\xi)\in \{(x,\gamma)\in \mathbb{R}^{n}\times
\mathbb{S}^{n-1}|f_{k}(|x|)>0\}.$$
Using the definition of $l_A$ we can choose the maximum
point $x$, denoted by $p$, such that
\begin{equation}\label{e2.2}
\langle x,-a\rangle\geq |a||x|\cos\frac{\pi}{l_A}.
\end{equation}
By (\ref{e1.511}), we have  $u_{\gamma\gamma}>0$.  Let
  $$\eta_{k}(x)=f_{k}(|x|),\,\,\, w=\eta_{k}(x)u_{\xi\xi}.$$
Then at $p$,
\begin{equation}\label{e2.3}
0\geq
Lw=u^{ij}(\eta_{k}u_{\xi\xi})_{ij}=u^{ij}(\eta_{k})_{ij}
u_{\xi\xi}+2u^{ij}(\eta_{k})_{i}(u_{\xi\xi})_{j}
+\eta_{k}u^{ij}(u_{\xi\xi})_{ij}.
\end{equation}
We assume that
$$p\in\{x\in \mathbb{R}^n||x|> R_{0}+1\}.$$
By a rotation, we can assume
that $D^{2}u$  is diagonal at $p$ with $\xi$ as the $x_{1}$ direction. In
this case, $u_{\xi\xi}=u_{11}$. Then at $p$, there holds
\begin{equation*}
(\eta_{k}u_{11})_{j}=0, \,\,\,\,\, j=1,2,\cdots,n.
\end{equation*}
Hence
\begin{equation}\label{e2.4}
(u_{11})_{j}=-u_{11}\frac{(\eta_{k})_{j}}{\eta_{k}},
\,\,\,(\eta_{k})_{j}=-\eta_{k}\frac{(u_{11})_{j}}{u_{11}},
\,\,\,\,\, j=1,2,\cdots,n.
\end{equation}
Clearly, by (\ref{e2.4}),
\begin{equation}\aligned\label{e2.5}
2u^{ij}(\eta_{k})_{i}(u_{11})_{j}=\; &u^{11}(\eta_{k})_{1}
u_{111}+u^{11}(\eta_{k})_{1}u_{111}+2\sum_{i\neq 1}
\frac{(\eta_{k})_{i}u_{11i}}{u_{ii}}\\
=&-u^{11}\frac{(\eta_{k})_{1}(\eta_{k})_{1}}{\eta_{k}}
u_{11}-u^{11}\eta_{k}\frac{u^{2}_{111}}{u_{11}}-
2\sum_{i\neq 1}\eta_{k}\frac{u^{2}_{11i}}{u_{ii}u_{11}}.
\endaligned
\end{equation}
Differentiating  the equation (\ref{e1.3}), we have
\begin{equation*}
u^{ij}u_{ij1}=-a_{i}u_{i1}+b_{1},
\end{equation*}
\begin{equation}\label{e2.6}
u^{ij}u_{11ij}=\sum_{i,j=1}^{n}\frac{u^{2}_{ij1}}
{u_{ii}u_{jj}}-a_{i}u_{i11}.
\end{equation}
Substituting (\ref{e2.5}), (\ref{e2.6}) into (\ref{e2.3}) and using
$$(\eta_{k})_{i}=-2k x_{i},\qquad
 (\eta_{k})_{ij}=-2k \delta_{ij},$$
we obtain, at $p$,
\begin{equation*}\aligned
0\geq &
- 2k \sum_{i=1}^{n}u^{ii}u_{11}
-\frac{(\eta_{k})^{2}_{1}}{\eta_{k}}-
\eta_{k}\frac{u^{2}_{111}}{u^{2}_{11}}-
2\eta_{k}\sum_{i\neq 1}\frac{u^{2}_{11i}}{u_{ii}u_{11}}\\
&+\eta_{k}\sum_{i,j=1}^{n}\frac{u^{2}_{ij1}}{u_{ii}u_{jj}}
-\eta_{k}a_{i}u_{i11}.
\endaligned
\end{equation*}
Note that
\begin{equation*}
\eta_{k}\sum_{i,j=1}^{n}\frac{u^{2}_{ij1}}{u_{ii}u_{jj}}\geq
\eta_{k}\frac{u^{2}_{111}}{u^{2}_{11}} +2\eta_{k}\sum_{i\neq
1}\frac{u^{2}_{11i}}{u_{ii}u_{11}}.
\end{equation*}
Combining the above two inequalities,  we get
\begin{equation*}
0\geq
- 2k \sum_{i=1}^{n}u^{ii}
u_{11}-\frac{(\eta_{k})^{2}_{1}}{\eta_{k}}
-\eta_{k}a_{i}u_{i11}.
\end{equation*}
In view of (\ref{e2.4}),
\begin{equation*}
\eta_{k}u_{i11}=-u_{11}(\eta_{k})_{i}=2k x_{i}
 u_{11}.
\end{equation*}
Then at $p$,
\begin{equation*}
\frac{(\eta_{k})^{2}_{1}}{\eta_{k}}\geq  - 2k
\sum_{i=1}^{n}u^{ii}u_{11}
-2k a_{i}x_{i}u_{11}.
\end{equation*}
Using (\ref{e2.2}) and $u_{ii}\geq \dfrac{\lambda}{|x|}$
for $i\geq 2$, it follows from the above arguments  that
\begin{equation*}\aligned
\frac{4k^{2}x^{2}_{1}}{\eta_{k} }\geq
 - 2k -\frac{2k(n-1)}
{\lambda }|x|u_{11}
 +2k|a||x|u_{11}\cos\frac{\pi}{l_A}.
\endaligned
\end{equation*}
Noting $\lambda>  \dfrac{n-1 }{|a|\cos\dfrac{\pi}{l_A}}$,
at $p$, we have
\begin{equation*}
\dfrac{1}{\left(|a|\cos\dfrac{\pi}{l_A}-\dfrac{n-1}{\lambda}\right)|x|}
\left(\eta_{k}+ 2kx^{2}_{1} \right)
\geq  \eta_{k}u_{11}.
\end{equation*}
Thus   if $p\in\{x\in\mathbb{R}^{n}||x|> R_{0}+1\}$,  then there holds
\begin{equation}\label{e2.7}
\max_{x\in\mathbb{R}^{n},\gamma\in
\mathbb{S}^{n-1}}\eta_{k}u_{\gamma\gamma}\leq
\dfrac{\dfrac{3}{4}+2k(R_{0}+1)^{2}}{\left(|a|\cos\dfrac{\pi}
{l_A}-\dfrac{n-1}{\lambda}\right)(R_{0}+1)}
.
\end{equation}
 And  if $p\in\{x\in\mathbb{R}^{n}||x|\leq R_{0}+1\}$, then
\begin{equation}\label{e2.8}
\max_{x\in\mathbb{R}^{n},\gamma\in
\mathbb{S}^{n-1}}\eta_{k}u_{\gamma\gamma}\leq
\|u\|_{C^{2}(\bar{B}_{R_{0}+1})}.
\end{equation}
From (\ref{e2.7}) and (\ref{e2.8}), by $k\leq 1$ we obtain
\begin{equation*}\label{e2.9}
\max_{x\in\mathbb{R}^{n},\gamma\in
\mathbb{S}^{n-1}}\eta_{k}u_{\gamma\gamma}\leq
\dfrac{11(R_{0}+1)}{4\left(|a|\cos\dfrac{\pi}{l_A}-
\dfrac{n-1}{\lambda}\right)} +\|u\|_{C^{2}(\bar{B}_{R_{0}+1})}.
\end{equation*}
For any fixed $x\in \mathbb{R}^{n}$ and $\gamma\in
\mathbb{S}^{n-1}$, let $k$ converges to $0$,  then
$$\dfrac{3}{4}u_{\gamma\gamma}\leq \dfrac{11(R_{0}+1)}
{4(|a|\cos\dfrac{\pi}{l_A}-\dfrac{n-1}{\lambda})}
+\|u\|_{C^{2}(\bar{B}_{R_{0}+1})}.$$
So we obtain
$$u_{\gamma\gamma}\leq \dfrac{11(R_{0}+1)}{3\left(|a|
\cos\dfrac{\pi}{l_A}-\dfrac{n-1}{\lambda}\right)}
+\dfrac{4}{3}\|u\|_{C^{2}(\bar{B}_{R_{0}+1})}$$
and inequality (\ref{e2.1}) is proved.
\end{proof}

\section{proof of the main results}

{\bf Proof of Theorem \ref{t1.1}:}

Introduce  Legendre transformation of $u$:
\begin{equation*}
\tilde{x}_{i}=\frac{\partial u}{\partial x_{i}}
,\,\,i=1,2,\cdots,n,\,\,\,u^{*}(\tilde{x}_{1},\cdots,
\tilde{x}_{n}):=\sum_{i=1}^{n}x_{i}\frac{\partial u}
{\partial x_{i}}-u(x),\,\,x\in R^n.
\end{equation*}
In terms of $\tilde{x}_{1},\cdots,\tilde{x}_{n},
u^{*}(\tilde{x}_{1},\cdots,\tilde{x}_{n})$, one can easily check that
$$\left(\frac{\partial^{2} u^{*}}{\partial \tilde{x}_{i}\partial
\tilde{x}_{j}}\right)=\left(\frac{\partial^{2} u}{\partial x_{i}
\partial x_{j}}\right)^{-1}.$$
Thus, in view of (\ref{e2.1}), $$D^{2}u^{*}\geq \frac{1}{C}I.$$ By
the PDE (\ref{e1.3}) we obtain
\begin{equation*}
\det D^{2}u^{*}=\exp\{a_{i}\tilde{x}_{i}-b_{i}u^{*}_{i}-c\}.
\end{equation*}
Since $u(Ax)=u(x)$ one can verify that
$u^{*}(A\tilde{x})=u^{*}(\tilde{x})$ for  each
$\tilde{x}\in\mathbb{R}^{n}$. Using Lemma \ref{l1.3}, we have
 $$D^{2}u^{*}\leq CI.$$
So
$$\frac{1}{C}I\leq D^{2}u\leq CI.$$
An application of  Lemma \ref{l2.2} yields the desired result.
\qed

{\bf Proof of Corollary \ref{c1.1}:}

Without loss of generality, we assume that $t_0=0$.
The equation (\ref{e1.4}) shows that $u$ must be strictly
convex. For $n=1$, similar to  the proof of Lemma \ref{l1.3},
the inequality (\ref{e2.2})
can be replaced by
\begin{equation*}
\langle t,-a_0\rangle\geq |a_{0}||t|.
\end{equation*}
Here we use the symmetry condition. Following the procedure in the proof of Theorem 1.1,
we obtain estimates (\ref{e2.1}) for $n=1$ and  then we  arrive at the conclusion of Corollary \ref{c1.1}.\qed

{\bf Proof of Theorem \ref{t1.2}:}
  Now we assume that $u$ is radially symmetric function, then $$\Psi=\ln\det(u_{ij})=\ln u_{rr}+(n-1)(\ln u_r-\ln r) $$ is also radially symmetric and depends only on $|x|$. Similar to the arguments in \cite{ACY},
  it follows that $\ln\det(u_{ij})$ must then attain either alocal maximum or a local minimum over any open ball $B$ in $\mathbb{R}^n$. From (\ref{e1.3}), we have
\begin{equation}\label{e2.10} u^{ij}\Psi_{ij}-\langle a,D\Psi\rangle=0.\end{equation}
Applying the strong maximum principle to (\ref {e2.10}), we see that $\Psi$ is constant in $B$, and hence in $\mathbb{R}^n$. From the classical Jorgens-Calabi-Pogorelov theorem, we complete
the proof of theorem. \qed

\vspace{5mm}
{\bf Acknowledgment:} The first author was supported by NNSF of China (Grant
No.11261008) and NNSF of Guangxi (Grant No.2012GXNSFBA053009) and
 was very grateful to  Institute of Differential Geometry at Leibniz University
Hannover for the kind hospitality.
The second author was partially supported by
NSFC (Grant No.11101129).

\end{document}